\newtheorem{theorem}[equation]{Theorem}
\newtheorem{lemma}[equation]{Lemma}
\newtheorem{remark}[equation]{Remark}
\newtheorem{conjecture}[equation]{Conjecture}
\numberwithin{equation}{section}
\def\dz{\hbox{$\mathbb Z$}}
\def\dc{\hbox{$\mathbb C$}}
\def\dq{\hbox{$\mathbb Q$}}
\def\C{\hbox{$\mathcal C$}}
\def\l({\left (}
\def\r){\right )}
\def\Q{\hbox{$\mathbb Q$}}
\def\F{\hbox{$\mathbb F$}}
\newcommand{\mylabel}[1]{\label{#1}}
\newcommand{\myref}[1]{\ref{#1}}
\def\index{n}
\def\heckeIndex{m}
\def\wtf{wt(f)}  
\def\wtg{{wt(g)}}
\begin{document}

\title{Rankin-Cohen brackets of eigenforms and modular forms}

 \author[Beyerl]{Jeffrey Beyerl}
 \address[Beyerl]{
Department of Mathematics\\
University of Central Arkansas\\
Conway, AR 72035
}

\begin{abstract}
We use Maeda's Conjecture to prove that the Rankin-Cohen bracket of an eigenform and any modular form is only an eigenform when forced to be because of the dimensions of the underlying spaces. We further determine when the Rankin-Cohen bracket of an eigenform and modular form is not forced to produce an eigenform and when it is determined by the injectivity of the operator itself. This can also be interpreted as using the Rankin-Cohen bracket operator of eigenforms to create evidence for Maeda's Conjecture.
\end{abstract}

\maketitle

\section{Introduction and Background}

Throughout this paper we will be investigating modular forms of level 1. Every such modular form has a Fourier expansion $f(z)=\sum a_m q^m$ where $q=e^{2\pi i z}$. The space of modular forms of weight $k$ is denoted by $M_k$; the space of cuspidal forms is denoted by $S_k$. Each of these have a diagonal basis consisting of modular forms with leading terms whose $q$-exponents are distinct. The $n^\text{th}$ Rankin-Cohen bracket of two modular forms is
$$[f,g]_\index = \sum_{r+s=\index} (-1)^r \binom{\index+wt(f)-1}{s}\binom{\index+wt(g)-1}{r}f^{(r)}(z)g^{(s)}(z).$$
Here $f^{(a)}(z)$ denotes the normalized derivative: $f^{(a)} = \frac{1}{(2\pi i)^a} \frac{d^a }{dz^a}f$. In particular, the $q$-coefficient is unchanged: $f^{(a)}(z)=a_1 q^1 + \Omega(q^2)$, where $\Omega$ is the standard big-Omega. For more information on the Rankin-Cohen bracket, see \cite{Lanphier08} or \cite{Lanphier04}. Recall that the Rankin-Cohen bracket operator is a bilinear operator from $M_{\wtf}\times M_{\wtg}$ to $M_{\wtf+\wtg+2\index}$. Additionally it is cuspidal if either input is cuspidal, or $\index>0$. If $\index=0$, it is merely pointwise multiplication.

The most common example of an eigenform is probably the Eisenstein series, of which we'll require the use of throughout this paper. The weight $k$ Eisenstein series is $E_k=1-\frac{2k}{B_k} \sum_{n=1}^{\infty} \sigma_{k-1}(n) q^n$ where $B_k$ is a Bernoulli number and $\sigma_{k-1}$ is the sum of the $k^{\text{th}}$ powers of divisors function.

\newpage Properties of products of eigenforms have been studied in various ways, starting in 1999 when Duke and Ghate independently proved exactly when the product of two eigenforms is an eigenform \cite{Duke99, Ghate00}. Since those seminal papers, there have been numerous continuations and generalizations. The most notable works are as follows. The product of many eigenforms was investigated by Emmons and Lanphier \cite{EmmonsLanphier07}. The product of nearly holomorphic modular forms was considered by Beyerl, James, Trentacoste, and Xue \cite{Beyerl12}, with Beyerl, James, and Xue going on to also consider the case that one factor need only be a modular form \cite{Beyerl2014}. The product of eigenforms through the Rankin-Cohen bracket operator was worked out by Lanphier and  Takloo-Bighash \cite{Lanphier04}. Generalizations to higher level have been studied by several authors including Ghate, Emmons, and Johnson \cite{Emmons05, Ghate00, Johnson13}. The Rankin-Cohen Bracket Operator applied to eigenforms was studied by Meher \cite{Meher2012}. In the same paper Meher also proved some results on modular forms with character.

In this paper we continue to extend these results by investigating when the Rankin-Cohen bracket of an eigenform and a modular form is again an eigenform.

\section{Preliminaries}
A subspace $S\subseteq S_k$ is $\F$-rational if it is stable under the action of $Gal(\C/\F$), where each automorphism acts on modular forms through their Fourier coefficients. The following theorem is the tool that relates the existence of equations yielding eigenforms to Maeda's Conjecture. It is proved as Lemma 2.3 in \cite{Beyerl2014}.

\begin{lemma}\mylabel{ProperSubspaceNoEforms}
	If $S$ is a proper $\F$-rational subspace of $S_k$ and $S$ contains an eigenform, then all the Hecke polynomials of weight $k$ are reducible over $\F$. 
\end{lemma}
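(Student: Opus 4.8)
The plan is to argue by contraposition. Rather than showing directly that every Hecke polynomial of weight $k$ is reducible over $\F$, I would assume that some Hecke polynomial --- say the characteristic polynomial of the Hecke operator $T_{\heckeIndex}$ acting on $S_k$ --- is irreducible over $\F$, and then show that no proper $\F$-rational subspace can contain an eigenform. The mechanism that drives the whole argument is the compatibility of the Hecke action with coefficientwise Galois conjugation.

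First I would record the standard fact that $\mathrm{Gal}(\mathbb{C}/\F)$ permutes the normalized Hecke eigenforms of $S_k$. Since each $T_{\heckeIndex}$ acts on Fourier coefficients through a formula with rational (indeed integer) coefficients, namely $(T_{\heckeIndex}f)_{\index}=\sum_{d\mid\gcd(\heckeIndex,\index)} d^{k-1}a_{\heckeIndex\index/d^2}$, every $\sigma\in\mathrm{Gal}(\mathbb{C}/\F)$ commutes with $T_{\heckeIndex}$. Hence if $f$ is an eigenform with $T_{\heckeIndex}f=\lambda f$, then $\sigma(f)$ is again a cusp form in $S_k$ satisfying $T_{\heckeIndex}\sigma(f)=\sigma(\lambda)\sigma(f)$; that is, a Galois conjugate of an eigenform is an eigenform, with the eigenvalues conjugated accordingly.

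Next I would translate irreducibility of the Hecke polynomial into transitivity of the Galois action on eigenforms. If the characteristic polynomial of $T_{\heckeIndex}$ on $S_k$ is irreducible over $\F$, then, being separable in characteristic zero, it has $\dim S_k$ distinct roots, and these roots are precisely the $T_{\heckeIndex}$-eigenvalues of the finitely many eigenforms. The roots of an irreducible polynomial form a single orbit under the automorphisms fixing $\F$, and since distinct roots correspond to distinct eigenforms, $\mathrm{Gal}(\mathbb{C}/\F)$ acts transitively on the entire set of eigenforms of weight $k$. Now let $S$ be an $\F$-rational subspace containing an eigenform $f$. By $\F$-rationality every conjugate $\sigma(f)$ lies in $S$, and by transitivity these conjugates exhaust all the eigenforms. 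Because the eigenforms form a basis of $S_k$, this forces $S=S_k$, so $S$ is not proper. Contraposing yields the claim.

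The step I would take the most care with, and where I expect the only real subtlety, is the middle paragraph: one must check that irreducibility of a \emph{single} $T_{\heckeIndex}$-characteristic polynomial genuinely produces a transitive action on eigenforms and not merely on a set of eigenvalues. This requires separability (so that the roots, hence the eigenforms, are distinct and in bijection with the roots) together with the observation that every automorphism of $\mathbb{C}$ fixing $\F$ restricts to an automorphism of the number field generated by the eigenvalues, and conversely that such restrictions fill out the relevant finite Galois group. The remaining ingredients --- the commutation of $T_{\heckeIndex}$ with conjugation and the fact that eigenforms span $S_k$ --- are routine.
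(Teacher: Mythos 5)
Your proposal is correct and matches, in essence, the argument the paper relies on: the paper does not reprove this lemma but cites it as Lemma 2.3 of \cite{Beyerl2014}, where the proof is exactly this Galois-orbit argument --- Hecke operators commute with coefficientwise conjugation, irreducibility (hence separability) of a single Hecke polynomial forces $\mathrm{Gal}(\mathbb{C}/\F)$ to act transitively on the normalized eigenforms, and an $\F$-rational subspace containing one eigenform therefore contains all of them and so equals $S_k$. Your contrapositive formulation and your flagged subtlety (extending automorphisms of the splitting field to $\mathbb{C}$ and back) are handled correctly, so there is no gap.
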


Throughout the sequel the Hecke operator $T_{m, \wtf+\wtg+2\index}$ will be abbreviated as $T_m$. Similarly the Hecke Polynomial $T_{m, \wtf+\wtg+2\index}(x)$ will be abbreviated as $T_m(x)$. 

Maeda's Conjecture (See Conjecture \myref{Maeda}) tells us that no Hecke operators are reducible. In Theorems \myref{CuspidalCase}, \myref{NonCuspidalCase}, and \myref{fCuspidalCase}, we give sufficient conditions involving the factorization of certain eigenorms that would imply corresponding Hecke operators are reducible. To date no example satisfying these conditions has been found; this gives evidence that Maeda's Conjecture might be true.

We start with a simple but necessary technical detail given in the lemma below.

\begin{lemma}
	Let $\sigma\in Gal(\dc/\dq)$, and $f,g$ be modular forms. Then $\sigma([f,g]_\index)=[\sigma(f), \sigma(g)]_\index$.
\end{lemma}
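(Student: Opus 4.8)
The plan is to observe that $\sigma$, acting coefficientwise on $q$-expansions, is a ring automorphism of the formal power series that fixes $\dq$, and that each ingredient of the Rankin--Cohen bracket is assembled from operations that manifestly commute with such an action. Writing $f=\sum_m a_m q^m$, the normalized derivative satisfies $f^{(a)}=\sum_m m^a a_m q^m$, since $\frac{1}{2\pi i}\frac{d}{dz}q^m = m\,q^m$. Thus $f^{(a)}$ is obtained from $f$ by scaling the $m$-th coefficient by the integer $m^a$, an operation fixed by $\sigma$, so that $\sigma(f^{(a)})=(\sigma f)^{(a)}$, and likewise $\sigma(g^{(s)})=(\sigma g)^{(s)}$.

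I would next record that $\sigma$ respects the product of $q$-expansions: since the coefficients of $f^{(r)}g^{(s)}$ are the usual convolutions (finite integer-coefficient polynomials) of the coefficients of $f^{(r)}$ and $g^{(s)}$, we have $\sigma(f^{(r)}g^{(s)})=\sigma(f^{(r)})\,\sigma(g^{(s)})$. The scalars $(-1)^r\binom{\index+\wtf-1}{s}\binom{\index+\wtg-1}{r}$ are rational integers and hence fixed by $\sigma$. Applying $\sigma$ term by term to the defining sum for $[f,g]_\index$ and pulling it through each factor then yields $\sum_{r+s=\index}(-1)^r\binom{\index+\wtf-1}{s}\binom{\index+\wtg-1}{r}(\sigma f)^{(r)}(\sigma g)^{(s)}$.

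The one point requiring care --- and the only genuine obstacle --- is that the binomial coefficients depend on the weights $\wtf$ and $\wtg$, so the displayed expression equals $[\sigma f,\sigma g]_\index$ only if $\sigma f$ and $\sigma g$ share the weights of $f$ and $g$. I would close this gap by invoking the standard fact that $M_k$ and $S_k$ admit bases of forms with integer Fourier coefficients (indeed the diagonal bases mentioned in the introduction), so the coefficientwise action of $\sigma$ maps $M_k$ to itself; in particular $\sigma f$ and $f$ have the same weight, and similarly for $g$. With the weights matched the binomial factors coincide and the two expressions agree, completing the proof. The argument is essentially a bookkeeping verification, with no deeper input needed beyond the weight-preservation of the Galois action.
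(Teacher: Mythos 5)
Your proof is correct and follows essentially the same route as the paper's: apply $\sigma$ coefficientwise, use that it fixes the integer scalars and binomial coefficients, and that it commutes with the normalized derivative and with multiplication of $q$-expansions. The only substantive difference is that you explicitly justify that $\sigma$ preserves weight (via rational bases of $M_k$), a point the paper's proof uses implicitly when it identifies the final sum with $[\sigma(f),\sigma(g)]_\index$ --- a worthwhile bit of extra care, but not a different argument.
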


\begin{proof}
	Recall that $\sigma(f)$ is defined on the Fourier coefficients  of $f$. That is, if $f=\sum f_i q^i$, then $\sigma(f):=\sum \sigma(f_i) q^i$. Also note that $\sigma$ fixes $\dq$, so that in particular it fixes binomial coefficients. We can now compute $\sigma([f,g]_\index)$. 

\begin{align*}
	\sigma([f,g]_n)&=\sigma\l(\sum_{r+s=\index} (-1)^r \binom{n+\wtf-1}{s}\binom{n+\wtg-1}{r} f^{(r)}(z) g^{(s)}(z)\r)\\
	&=\sum_{r+s=n} (-1)^r \binom{n+\wtf-1}{s}\binom{n+\wtg-1}{r} \sigma\l(\sum_{i+j=m} i^r j^s f_i g_j q^m  \r) \\
	&=\sum_{r+s=n} (-1)^r \binom{n+\wtf-1}{s}\binom{n+\wtg-1}{r}\sum_{i+j=m} i^r j^s \sigma(f_i)\sigma( g_j) q^m \\
	&=\sum_{r+s=n} (-1)^r \binom{n+\wtf-1}{s}\binom{n+\wtg-1}{r} (\sigma(f))^{(r)} (z) (\sigma(g))^{(r)}(z) \\
	&=[\sigma(f), \sigma(g)]_n
	\end{align*}

\end{proof}

Together these lemmas allow us to map the entire space of modular or just the cuspidal modular forms to a rational subspace of modular forms via the Rankin-Cohen bracket operator. We flesh this out in the next section. 

\section{Main Results}
In this section we give main results relating the Rankin-Cohen bracket and reducibility of the Hecke operator. Theorems \ref{CuspidalCase}, \ref{NonCuspidalCase}, and \ref{fCuspidalCase} together consider every nontrivial possibility of an eigenform and modular form. Each theorem gives sufficient conditions for $T_m$ to be reducible and/or the the Rankin-Cohen bracket to produce an eigenform.

Each theorem contains the Rankin-Cohen bracket $[f,g]$ or the linear operator $[f,\bullet]_n$. For consistency $f$ is always an eigenform, and $g$ is a modular form. The case where $g$ is also an eigenform is already covered by Lanphier and Takloo-Bighah \cite{Lanphier04} 

We begin by noting that there are four cases depending on whether $f$ and $g$ are cuspidal. If both $f$ and $g$ are cuspidal, clearly $[f,g]_\index$ cannot be an eigenform. This can be seen by looking at the formula for $[f,g]_\index$ in terms of the Fourier coefficients and noticing that both the $q^0$ and $q^1$ coefficients are zero. The remaining three cases each have multiple parts. See the table in Figure \ref{TableOfCases}.

\begin{theorem}\mylabel{CuspidalCase}
Assume $g$ is a cuspidal modular form and $f$ is a noncuspidal eigenform, that is, $f=E_{\wtf}$. There are two cases:
Case 1: $\dim(S_{\wtf+\wtg+2\index})>\dim(S_\wtg)$. If there is a cuspidal modular form $g$ such that $[f,g]_\index$ is an eigenform, then $T_\heckeIndex (x)$ is reducible for all $\heckeIndex$. Case 2: $\dim(S_{\wtf+\wtg+2\index})\leq\dim(S_{\wtg})$. In this case there is a cuspidal modular form $g$ such that $[E_{\wtf}, g]_\index$ is an eigenform. 
\end{theorem}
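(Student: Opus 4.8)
The plan is to analyze the single linear operator $\phi\colon S_\wtg \to S_{\wtf+\wtg+2\index}$ given by $\phi(g)=[E_\wtf,g]_\index$, which is well defined because the bracket of anything with a cusp form is again cuspidal. Both cases will follow once I establish three facts about $\phi$: that it is injective, that its image is a $\Q$-rational subspace, and how the dimension of its image compares to the target. Throughout I use that $f=E_\wtf$ is fixed, so $\phi$ is genuinely a fixed linear map in the variable $g$.

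For injectivity I would compute the leading term of $\phi(g)$. Writing the $q$-expansion of $E_\wtf$ with constant term $1$, I note that $E_\wtf^{(r)}$ has no constant term for $r\geq 1$, so it vanishes to order at least one at the cusp. If $g$ has leading term $g_d q^d$ with $d\geq 1$, then in each summand $E_\wtf^{(r)}g^{(s)}$ of the bracket every term with $r\geq 1$ contributes only to exponents $\geq d+1$, while the $q^d$ coefficient comes solely from $r=0,\ s=\index$ and equals $\binom{\index+\wtf-1}{\index}d^\index g_d$. This is nonzero, so $\phi$ preserves leading exponents. Applying this to the diagonal basis of $S_\wtg$, whose members have distinct leading exponents, their images again have distinct leading exponents and are therefore linearly independent; hence $\phi$ is injective and $\dim \phi(S_\wtg)=\dim S_\wtg$.

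For rationality I would use that $E_\wtf$ has rational Fourier coefficients, so $\sigma(E_\wtf)=E_\wtf$ for every $\sigma\in Gal(\dc/\dq)$. Since $S_\wtg$ has a basis of forms with rational coefficients it is Galois-stable, and for $h=\phi(g)$ in the image the preceding lemma gives $\sigma(h)=\sigma([E_\wtf,g]_\index)=[E_\wtf,\sigma(g)]_\index=\phi(\sigma(g))$, which lies in $\phi(S_\wtg)$ because $\sigma(g)\in S_\wtg$. Thus $\phi(S_\wtg)$ is stable under $Gal(\dc/\dq)$, i.e.\ it is $\Q$-rational.

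In Case 1 the hypothesis $\dim S_{\wtf+\wtg+2\index}>\dim S_\wtg=\dim\phi(S_\wtg)$ makes $\phi(S_\wtg)$ a proper $\Q$-rational subspace of $S_{\wtf+\wtg+2\index}$; if some cuspidal $g$ makes $[E_\wtf,g]_\index$ an eigenform, then this proper rational subspace contains an eigenform and Lemma \ref{ProperSubspaceNoEforms} yields that every $T_\heckeIndex(x)$ is reducible over $\Q$. In Case 2 injectivity gives $\dim\phi(S_\wtg)=\dim S_\wtg\geq \dim S_{\wtf+\wtg+2\index}$, while $\phi(S_\wtg)\subseteq S_{\wtf+\wtg+2\index}$ forces the reverse inequality, so $\phi$ is onto; choosing any Hecke eigenform $h\in S_{\wtf+\wtg+2\index}$ and setting $g=\phi^{-1}(h)$ produces the desired cuspidal $g$ with $[E_\wtf,g]_\index=h$ an eigenform. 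I expect the only delicate point to be the leading-term bookkeeping underlying injectivity, together with verifying that $\phi$ really lands in the cusp space so that Lemma \ref{ProperSubspaceNoEforms} is applicable.
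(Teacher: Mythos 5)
Your proposal is correct and follows essentially the same route as the paper: the image $[E_{\wtf},S_{\wtg}]_\index$ is a $\Q$-rational subspace of $S_{\wtf+\wtg+2\index}$ (via the Galois-equivariance lemma and the rational basis of $S_{\wtg}$), Lemma \ref{ProperSubspaceNoEforms} handles Case 1, and preservation of leading exponents on a diagonal basis gives injectivity, hence surjectivity, in Case 2. Your write-up merely makes explicit what the paper compresses into ``maps a diagonal basis to a diagonal basis'' (the coefficient $\binom{\index+\wtf-1}{\index}d^{\index}g_d$) and treats $\index=0$ uniformly rather than deferring it to the earlier multiplication result.
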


If we leave $f$ as an Eisenstein series and allow $g$ to be noncuspidal, we obtain the next theorem.

\begin{theorem}\mylabel{NonCuspidalCase}
Assume $g$ is a noncuspidal modular form and $f$ is a noncuspidal eigenform, that is, $f=E_{\wtf}$. There are three cases:
Case 1: $\dim(S_{\wtf+\wtg+2\index})> \dim(M_{\wtg})$. In this case if there is a noncuspidal modular $g$ form such that $[f,g]_\index$ is an eigenform, then $T_\heckeIndex (x)$ is reducible for all $\heckeIndex$.
Case 2: $\dim(S_{\wtf+\wtg+2\index})< \dim(M_{\wtg})$. In this case there is a noncuspidal modular form $g$ such that $[E_{\wtf}, g]_\index$ is an eigenform. 
Case 3: $\dim(S_{\wtf+\wtg+2\index})= \dim(M_{\wtg})$. In this case either case (1) or case (2) will apply, depending on whether or not $[f, \bullet]_\index$ is injective.
\end{theorem}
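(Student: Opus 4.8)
The plan is to treat the entire theorem as a statement about the single $\dq$-linear operator
\[ \Phi := [E_{\wtf},\bullet]_\index : M_{\wtg} \longrightarrow S_{\wtf+\wtg+2\index}, \]
whose image is cuspidal because $\index>0$, and to read off the three cases from $\dim(\operatorname{im}\Phi)$. Two structural facts drive everything. First, $\operatorname{im}\Phi$ is a $\dq$-rational subspace of $S_{\wtf+\wtg+2\index}$: the Eisenstein series $E_{\wtf}$ has rational Fourier coefficients, so $\sigma(E_{\wtf})=E_{\wtf}$ for every $\sigma\in Gal(\dc/\dq)$, and the preceding lemma gives $\sigma([E_{\wtf},g]_\index)=[E_{\wtf},\sigma(g)]_\index$; since $M_{\wtg}$ is defined over $\dq$ we have $\sigma(g)\in M_{\wtg}$, whence $\sigma(\operatorname{im}\Phi)=\operatorname{im}\Phi$. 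Second, the restriction $\Phi|_{S_{\wtg}}$ is injective.

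To prove that injectivity I would take the diagonal basis $s_1,\dots,s_d$ of $S_{\wtg}$ with $s_i=q^i+\Omega(q^{i+1})$ and compute the leading term of $[E_{\wtf},s_i]_\index$. Writing $E_{\wtf}=1+\Omega(q)$, its constant term enters only through the $r=0$ summand, while every summand with $r\geq 1$ carries a factor $E_{\wtf}^{(r)}=\Omega(q)$ and is therefore divisible by $q^{i+1}$; hence the lowest-order term is $\binom{\index+\wtf-1}{\index}\,i^{\index}q^i$, which is nonzero. The images thus have pairwise distinct leading exponents and are linearly independent, so $\Phi|_{S_{\wtg}}$ is injective and in particular $\dim S_{\wtg}\leq\dim S_{\wtf+\wtg+2\index}$. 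Combined with the level-one identity $\dim M_{\wtg}=\dim S_{\wtg}+1$ (valid whenever $M_{\wtg}$ contains a noncuspidal form), this is exactly the input the case split needs.

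In Case 1, $\dim\operatorname{im}\Phi\leq\dim M_{\wtg}<\dim S_{\wtf+\wtg+2\index}$, so $\operatorname{im}\Phi$ is a proper $\dq$-rational subspace; if some noncuspidal $g$ makes $[E_{\wtf},g]_\index$ an eigenform, that eigenform lies in $\operatorname{im}\Phi$ and Lemma \ref{ProperSubspaceNoEforms} forces every $T_\heckeIndex(x)$ to be reducible over $\dq$. In Case 2, the inequality $\dim S_{\wtf+\wtg+2\index}<\dim M_{\wtg}=\dim S_{\wtg}+1$ together with $\dim S_{\wtg}\leq\dim S_{\wtf+\wtg+2\index}$ forces $\dim S_{\wtf+\wtg+2\index}=\dim S_{\wtg}$, so $\Phi|_{S_{\wtg}}$ is a bijection onto $S_{\wtf+\wtg+2\index}$ and $\Phi$ itself is surjective with one-dimensional kernel; since $\ker\Phi$ meets $S_{\wtg}$ trivially it is spanned by a noncuspidal form $k$, and for any Hecke eigenform $e$ with cuspidal preimage $g_c$ the form $g:=g_c+k$ is noncuspidal with $[E_{\wtf},g]_\index=e$. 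In Case 3 the source and target have equal dimension, so $\Phi$ is injective iff surjective: if it is not injective its image is a proper $\dq$-rational subspace and the Case 1 argument applies verbatim, whereas if it is injective it is bijective, $\Phi(S_{\wtg})$ has codimension one, and because the eigenforms span $S_{\wtf+\wtg+2\index}$ at least one of them lies outside $\Phi(S_{\wtg})$ and hence has a necessarily noncuspidal preimage.

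The main obstacle is the leading-term computation underlying the injectivity of $\Phi|_{S_{\wtg}}$; once that is secured, the three cases reduce to dimension bookkeeping plus Lemma \ref{ProperSubspaceNoEforms}. A secondary delicacy, easy to overlook, is that the statement demands a \emph{noncuspidal} witness $g$ in Cases 2 and 3, so I must verify that the eigenform produced admits a noncuspidal preimage rather than merely some preimage in $M_{\wtg}$; the one-dimensional kernel adjustment in Case 2 and the codimension count in Case 3 are precisely what guarantee this.
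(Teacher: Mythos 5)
Your proposal follows essentially the same route as the paper's own proof: the same operator $\Phi=[E_{\wtf},\bullet]_\index:M_{\wtg}\to S_{\wtf+\wtg+2\index}$, the same Galois-rationality of its image via the bracket-commutation lemma, Lemma \ref{ProperSubspaceNoEforms} for the reducibility direction, and the diagonal-basis leading-term computation to get injectivity on $S_{\wtg}$, with the three cases settled by dimension counting. In fact you are more careful than the paper in one respect: the theorem demands a \emph{noncuspidal} witness $g$ in Cases 2 and 3, while the paper's proof (which invokes the surjectivity of $[E_{\wtf},\bullet]_\index$ restricted to $S_{\wtg}$, as in Theorem \ref{CuspidalCase}) only exhibits ``some $g$,'' and as written that $g$ could be cuspidal. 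Your kernel adjustment $g=g_c+k$ in Case 2, and your codimension-one count in Case 3 (an eigenform must lie outside $\Phi(S_{\wtg})$ because eigenforms span $S_{\wtf+\wtg+2\index}$), supply exactly the detail the paper glosses over.

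The one genuine omission is the case $\index=0$. You assume $\index>0$ from the very first line (``whose image is cuspidal because $\index>0$''), but the theorem as stated places no restriction on $\index$, and for $\index=0$ your framework breaks down: the bracket is then pointwise multiplication, $[E_{\wtf},g]_0=E_{\wtf}\,g$ is noncuspidal whenever $g$ is, so the image of $\Phi$ is not contained in $S_{\wtf+\wtg}$ and Lemma \ref{ProperSubspaceNoEforms}, which concerns subspaces of the cusp space, cannot be applied. The paper dispatches this case in a single sentence by reducing it to the earlier result on products of eigenforms and modular forms in \cite{Beyerl2014}; your write-up needs either that reduction or an explicit standing hypothesis $\index\geq 1$ to cover the full statement.
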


\begin{figure}
\begin{center}
{\renewcommand{\arraystretch}{1.2}
\begin{tabular}{cc|cc|}
	\cline{3-4}
	\multicolumn{2}{c}{} & \multicolumn{2}{|c|}{$f$} \\ \cline{3-4}
	\multicolumn{2}{c}{} & \multicolumn{1}{|c}{$E_k$} & \multicolumn{1}{|c|}{Cuspidal} \\ \hline
	\multicolumn{1}{|c}{\multirow{2}{*}{$g$}} & \multicolumn{1}{|c}{Noncuspidal}& \multicolumn{1}{|c|}{Theorem \ref{NonCuspidalCase}}& \multicolumn{1}{c|}{Theorem \ref{fCuspidalCase}} \\ \cline{2-4}
	\multicolumn{1}{|c}{} & \multicolumn{1}{|c}{Cuspidal} & \multicolumn{1}{|c|}{Theorem \ref{CuspidalCase}}& \multicolumn{1}{c|}{Trivial} \\ \hline
\end{tabular} 
}
\end{center}\caption{Organization of cases in the main theorems.}\label{TableOfCases}
\end{figure}

The final theorem addresses the scenario that $f$ is noncuspidal while $g$ is cuspidal. 

\begin{theorem}\mylabel{fCuspidalCase}
Let $f$ be a cuspidal eigenform. There are two cases:
Case 1: $\dim(S_{\wtf+\wtg+2\index})> \dim(M_{\wtg})$. In this case if there is a noncuspidal modular $g$ form such that $[f,g]_\index$ is an eigenform then $T_\heckeIndex (x)$ is reducible for all $\heckeIndex$. Case 2: $\dim(S_{\wtf+\wtg+2\index})\leq \dim(M_{\wtg})$. In this case there is a noncuspidal modular form $g$ such that $[E_{\wtf}, g]_\index$ is an eigenform whenever $[f,\bullet]_n$ is injective. 
\end{theorem}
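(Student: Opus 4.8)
The plan is to handle both cases through the single linear operator $[f,\bullet]_\index\colon M_{\wtg}\to S_{\wtf+\wtg+2\index}$, whose image is automatically cuspidal because $f$ is cuspidal. This mirrors the two preceding theorems: Case 1 combines Lemma \ref{ProperSubspaceNoEforms} with the Galois-equivariance lemma, while Case 2 is a dimension count that promotes injectivity to surjectivity. Note that in Case 2 the bracket in the statement should read $[f,g]_\index$ rather than $[E_{\wtf},g]_\index$, since $f$ is the cuspidal eigenform here.

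For Case 1, I would first let $\F$ be the number field generated by the Fourier coefficients of the eigenform $f$, so that $\sigma(f)=f$ for every $\sigma\in Gal(\dc/\F)$. The key claim is that the image $V:=[f,M_{\wtg}]_\index$ is an $\F$-rational subspace of $S_{\wtf+\wtg+2\index}$. Indeed, for $\sigma\in Gal(\dc/\F)$ the equivariance lemma gives $\sigma([f,g]_\index)=[\sigma(f),\sigma(g)]_\index=[f,\sigma(g)]_\index$, and $\sigma(g)\in M_{\wtg}$ because $M_{\wtg}$ has a basis of forms with rational Fourier coefficients; hence $\sigma(V)\subseteq V$. Since $\dim V\le\dim M_{\wtg}<\dim S_{\wtf+\wtg+2\index}$ under the Case 1 hypothesis, $V$ is a \emph{proper} $\F$-rational subspace. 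If some noncuspidal $g$ makes $[f,g]_\index$ an eigenform, then $V$ is a proper $\F$-rational subspace containing an eigenform, and Lemma \ref{ProperSubspaceNoEforms} forces every $T_\heckeIndex(x)$ to be reducible over $\F$.

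For Case 2, assume $[f,\bullet]_\index$ is injective. Injectivity gives $\dim M_{\wtg}=\dim[f,M_{\wtg}]_\index\le\dim S_{\wtf+\wtg+2\index}$, and combined with the Case 2 hypothesis $\dim S_{\wtf+\wtg+2\index}\le\dim M_{\wtg}$ this forces equality, so $[f,\bullet]_\index$ is an isomorphism onto $S_{\wtf+\wtg+2\index}$. The genuine subtlety is producing a \emph{noncuspidal} preimage: the restriction of the isomorphism to the cuspidal subspace $S_{\wtg}$ has image a hyperplane $W\subsetneq S_{\wtf+\wtg+2\index}$, proper because $S_{\wtg}$ has codimension one in $M_{\wtg}$. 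Since $S_{\wtf+\wtg+2\index}$ is spanned by Hecke eigenforms, these cannot all lie in the proper subspace $W$, so there is an eigenform $h\notin W$; its unique preimage $g$ then lies outside $S_{\wtg}$, i.e. $g$ is noncuspidal, and $[f,g]_\index=h$ is an eigenform.

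The main obstacle is exactly this noncuspidal-preimage step in Case 2: an isomorphism onto $S_{\wtf+\wtg+2\index}$ immediately supplies eigenforms in the image, but one must rule out the possibility that every eigenform pulls back to a cusp form, which is where the spanning property of eigenforms is essential. In Case 1 the only delicate point is the Galois-stability of $M_{\wtg}$, which follows from the existence of a rational (e.g.\ Miller) basis.
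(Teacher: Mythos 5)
Your proposal is correct and follows the same skeleton as the paper's proof: Case 1 is the rational-subspace argument via Lemma \ref{ProperSubspaceNoEforms} applied to the image $[f,M_{\wtg}]_n$, and Case 2 converts injectivity plus the dimension hypothesis into surjectivity and then extracts an eigenform. But your execution differs from the paper's in ways worth recording. First, you are more careful than the paper on two genuine subtleties. Since $f$ is cuspidal, its Fourier coefficients need not be rational, so one must work with $\F$-rationality for $\F$ the coefficient field of $f$; the paper just says ``rational,'' while you set this up explicitly so that $\sigma(f)=f$ for $\sigma\in Gal(\dc/\F)$ and Lemma \ref{ProperSubspaceNoEforms} applies over $\F$. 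More importantly, your Case 2 closes a gap the paper leaves open: the paper invokes ``the argument from case (2)'' of Theorem \ref{NonCuspidalCase} and never explains why the eigenform obtained has a \emph{noncuspidal} preimage. Your hyperplane argument --- $[f,S_{\wtg}]_n$ is a proper subspace of $S_{\wtf+\wtg+2n}$, eigenforms span $S_{\wtf+\wtg+2n}$, hence some eigenform pulls back outside $S_{\wtg}$ --- supplies exactly the missing step. You also treat $n=0$ uniformly (legitimately so, since cuspidality of $f$ already forces the image into cusp forms), whereas the paper defers $n=0$ to prior work.

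One discrepancy in coverage should be flagged. The paper's proof additionally handles the subcase $\dim(S_{\wtf+\wtg+2n})=\dim(M_{\wtg})-1$, which it identifies as occurring exactly when $\wtf=12$ and $\wtg\equiv 0 \pmod{12}$. There injectivity of $[f,\bullet]_n$ on $M_{\wtg}$ is impossible for dimension reasons, so the paper switches to asking whether the restriction $[f,\bullet]_n\colon S_{\wtg}\to S_{\wtf+\wtg+2n}$ is injective, and still produces an eigenform with noncuspidal preimage (the kernel of the map on $M_{\wtg}$ is then spanned by a noncuspidal form, which can be added to a cuspidal preimage). Under the literal reading of the statement, where ``$[f,\bullet]_n$ is injective'' refers to the operator on $M_{\wtg}$, your proof is complete and this subcase is vacuous; but the paper's intended result is strictly stronger there, and your write-up should at least note which domain of injectivity you are using, since the distinction is exactly what separates the two readings.
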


\begin{remark}
When comparing Theorems \ref{NonCuspidalCase} and \ref{fCuspidalCase}, we remark that the question of injectivitity is slightly different in these two cases. In particular in Theorem \ref{NonCuspidalCase} $f$ is noncuspidal, so the Fourier expansion has a constant term. On the other hand in Theorem \ref{fCuspidalCase} $f$ is cuspidal, and so there is no constant term to work with. While the theorems overlap in weight, the operators themselves are different.
\end{remark}

\section{Proofs of Main Thoerems}

We start with the proof of of Theorem \myref{CuspidalCase}.
\begin{proof}[\bf Proof of Theorem \myref{CuspidalCase}]
	If $\index=0$ this is multiplication, and so reduces to our previous result \cite{Beyerl2014}. Otherwise we have that $[E_{\wtf}, S_{\wtg}]_\index\subseteq S_{\wtf+\wtg+2\index}$. Noting that $S_{\wtg}$ has a rational basis (See \cite{Shimura1971}), we have that $\sigma([E_{\wtf}, S_{\wtg}]_\index)=[\sigma(E_{\wtf}), \sigma(S_{\wtg})]_\index = [E_{\wtf}, S_{\wtg}]_n$ and so is rational.

Hence if $\dim(S_{\wtf+\wtg+2\index})>\dim(S_{\wtg})$ we have a proper rational subspace of $S_{\wtf+\wtg+2\index}$. Lemma \myref{ProperSubspaceNoEforms} then tells us that if there are any Eigenforms in this space, all the Hecke Operators $T_\heckeIndex(x)$ are reducible over $\Q$. 

On the other hand suppose $\dim(S_{\wtf+\wtg+2\index})\leq \dim(S_{\wtg})$. Then because $\wtf+2n\geq 4$, $\dim(S_{\wtf+\wtg+2\index})= \dim(S_{\wtg})$.

Then we note that the linear map $[E_{\wtf},\bullet]_\index: S_{\wtg} \to S_{\wtf+\wtg+2n}$ maps a diagonal basis to a diagonal basis; thus it is surjective and therefore must contain an eigenform.
\end{proof}

The proof of the next theorem, \myref{NonCuspidalCase}, handles the situation that both $f$ and $g$ are noncuspidal. It is broken into three cases, depending on the dimensions of the two relevant spaces.

\begin{proof}[\bf Proof of Theorem \myref{NonCuspidalCase}]
	If $n=0$ the Rankin Cohen bracket operator is multiplication and so the theorem reduces to our previous result \cite{Beyerl2014}. Assume $n\geq1$.

In the case that $\dim(S_{\wtf+\wtg+2\index})> \dim(M_{\wtg})$, consider the map $[E_{\wtf},\bullet]_\index: M_{\wtg} \to S_{\wtf+\wtg+2n}$ which gives a proper rational subspace of $S_{\wtf+\wtg+2n}$ so that similar to the proof of Theorem \myref{CuspidalCase} if the range contained an eigenform, then all Hecke polynomials would be reducible. For completeness sake, we will note that the dimension of this space is either $\dim(M_{\wtg})$ or $\dim(S_{\wtg})$, but in both scenarios it is proper and not necessary for the proof.

	In the next case $\dim(S_{\wtf+\wtg+2\index})< \dim(M_{\wtg})$. In fact a diagonal basis of $S_{\wtg}$ stays diagonal when all of $M_{\wtg}$ is mapped, so the dimension can decrease by at most one, and thus $\dim(S_{\wtf+\wtg+2\index})= \dim(S_{\wtg})$. Hence by a similar argument as the previous theorem, there is some $g$ so that $[f,g]_\index$ is an eigenform.
	
	Now consider the case that $\dim(S_{\wtf+\wtg+2\index})= \dim(M_{\wtg})$. When the operator $[E_{\wtf},\bullet]_\index: M_{\wtg} \to S_{\wtf+\wtg+2n}$ is not injective, then the dimension decreases by one and we fall into the same argument as the first case and any eigenform would yield reducibility of all the Hecke operators. When it is injective, then the argument from case (2) applies and there is an eigenform $[f,g]_\index$ for some modular form $g$. There are a total of 149 cases (infinite classes based on the weight of $g$ mod $12$) where this occurs; these cases are illustrated in Figure \ref{Cases149}. 
\end{proof}

\begin{figure}
\includegraphics[scale=.5]{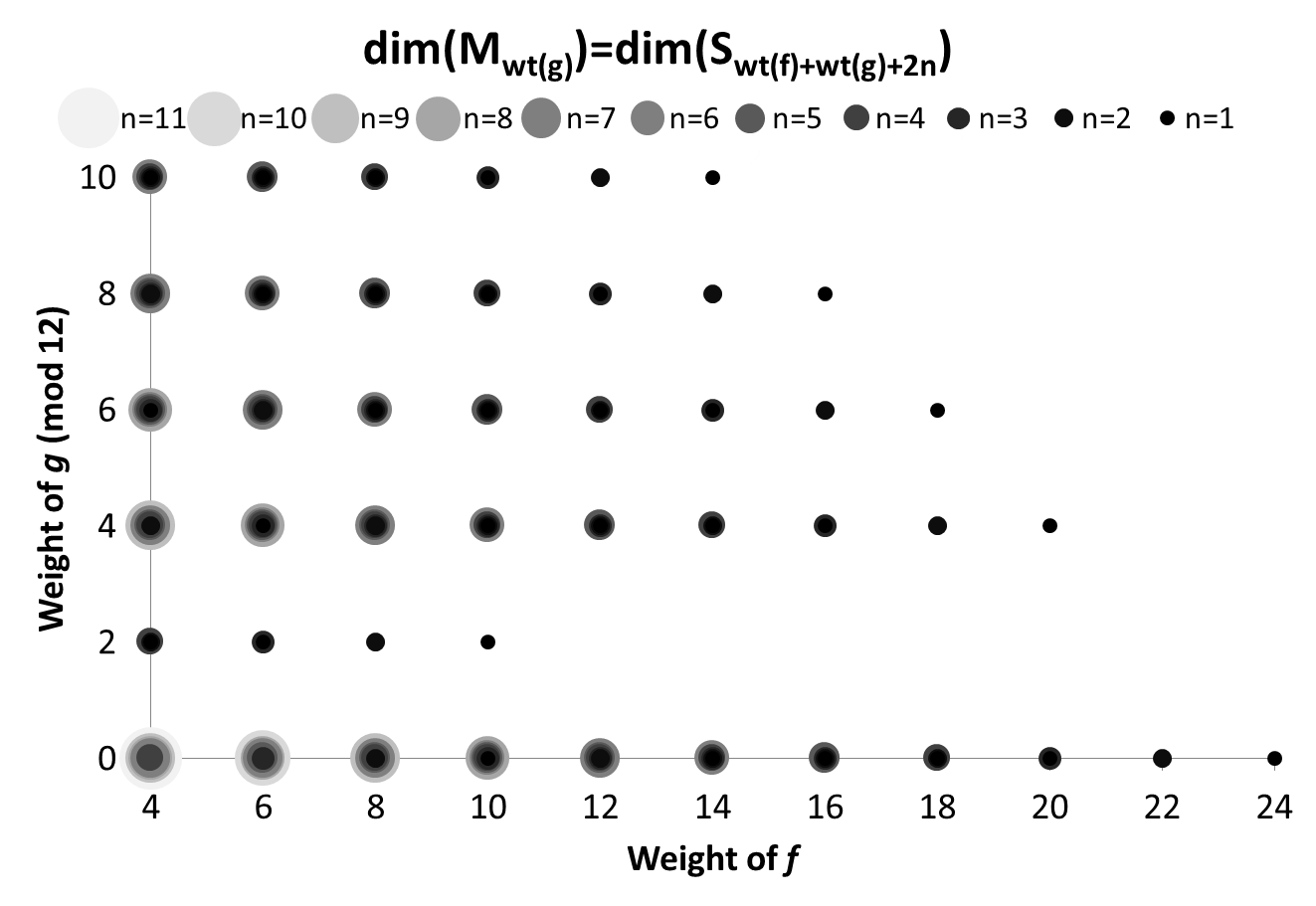}
\caption{Given different weights of $g$ (mod $12$) and $f$, there are only 149 times that $\dim(S_{\wtf+\wtg+2n})= \dim(M_{\wtg})$. This is easily computable, but shown here for illustration. There are 149 circles on this graph. The position of the circle is determined by the weights of $f$ and $g$, while the size and shade are determined by $n$. Note that for illustrative purposes many circles are stacked ontop of each other. For instance at $wt(f)=14$ and $wt(g)=4$ there are two circles. One corresponding to $n=1$ and another to $n=2$. It is straightforward, albeit tedious, to write down a table listing each of these cases}\label{Cases149}
\end{figure}

The final theorem, proven below, assumes that $f$ is cuspidal.

\begin{proof}[\bf Proof of Theorem \myref{fCuspidalCase}]
If $n=0$, this is multiplication and is covered in \cite{Beyerl2014}. Hence we need only consider the cases that $n\geq 1$. 

If the dimension of the underlying spaces increases too much, then $M_{wt(g)}$ creates the a rational subspace $[f,M_{wt(g)}]_\index$ of $S_{\wtf+\wtg+2\index}$. The rational subspace Lemma, \myref{ProperSubspaceNoEforms}, then forces all the Hecke operators $T_\heckeIndex (x)$ to be reducible. This is the case that $\dim(S_{\wtf+\wtg+2\index})> \dim(M_{\wtg})$ and in particular this occurs whenever $\wtf>24$ or $n>7$. 

There are only 45 remaining cases (infinite classes with respect to $wt(g)$ being fixed only modulo 12) where $\dim(S_{\wtf+\wtg+2\index}) = \dim(M_{\wtg})$. These are all in Figure \ref{Cases149} when $f$ could be cuspidal. They can be identified from Figure \ref{Cases149} by ignoring the entries where $\wtf=4, 6, 8, 10,$ and $14$. In each of these cases either the argument from case one of the proof of Theorem \ref{NonCuspidalCase} or the argument from case two of the proof of Theorem \ref{NonCuspidalCase} applies, depending on whether or not $[f, \bullet]_\index:M_{\wtg}\rightarrow S_{\wtf+\wtg+2\index}$ is injective.

The final case (infinite class again with respect to $wt(g)$ varying) is the only case in which $\dim(S_{\wtf+\wtg+2\index}) = \dim(M_{\wtg})-1$. This is when $\wtf=12$ and $\wtg \equiv 0$ mod $12$. The analysis from the previous paragraph is identical, except that the function to consider is whether or not $[f, \bullet]_\index:S_{\wtg}\rightarrow S_{\wtf+\wtg+2\index}$ is injective.
\end{proof}

\section{Conclusions and Maeda's Conjecture}

We have continued the classification of the factorization of eigenforms, in particular in terms of the Rankin-Cohen bracket operator. We now relate this to Maeda's Conjecture, stated in its original form below. 

\begin{conjecture}[Maeda, \cite{Maeda97}]\mylabel{Maeda}
	The Hecke algebra over $\dq$ of $S_k (SL_2(\dz))$ is simple (that is, a single number field) whose Galois closure over $\dq$ has Galois group isomorphic to the symmetric group $\mathcal{S}_n$ (with $n=dim S_k (SL_2(\dz))$).  
\end{conjecture}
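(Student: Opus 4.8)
This final statement is Maeda's Conjecture itself, a well-known open problem: no unconditional proof is known, and the present paper treats it as a hypothesis (indeed, the paper's purpose is to produce evidence for it) rather than establishing it. What can honestly be offered is the verification strategy that underlies the extensive numerical evidence at each fixed weight $k$, together with an account of why that strategy does not extend to a proof. The first step is to reduce the two assertions to a single polynomial. The Hecke algebra $\mathbb{T}_k=\dq[T_2,T_3,\dots]$ acting on $S_k(SL_2(\dz))$ has $\dq$-dimension $n=\dim S_k(SL_2(\dz))$, so if the characteristic polynomial $P_k(x)$ of $T_2$ is irreducible over $\dq$, then $\dq[T_2]\cong\dq[x]/(P_k)$ is already a degree-$n$ field and hence equals all of $\mathbb{T}_k\otimes\dq$; this yields the ``single number field'' assertion. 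It then remains to show that the Galois group of the splitting field of $P_k$ is the full symmetric group $\mathcal{S}_n$.

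For the Galois group I would argue through reductions of $P_k$ modulo primes $\ell$ not dividing $\mathrm{disc}(P_k)$. By the Frobenius--Dedekind correspondence, the degrees of the irreducible factors of $P_k\bmod\ell$ record the cycle type of a Frobenius element of $G=\mathrm{Gal}(P_k/\dq)\le\mathcal{S}_n$. Since $P_k$ is already irreducible over $\dq$, the group $G$ is transitive, so transitivity comes for free from the first step. The plan is then to locate two primes with prescribed factorization shapes: (i) a prime at which $P_k$ factors as a linear times an irreducible of degree $n-1$, which produces an $(n-1)$-cycle, whence the point-stabilizer acts transitively on the remaining letters and $G$ is doubly transitive, in particular primitive; and (ii) a prime at which $P_k$ factors as one irreducible quadratic times $n-2$ linear factors, which produces a transposition. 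With a primitive group containing a transposition in hand, the classical theorem of Jordan that such a group must be all of $\mathcal{S}_n$ completes the argument for that weight.

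The hard part, and the reason this remains a conjecture rather than a theorem, is uniformity in $k$. The scheme above is a separate finite computation for each weight and gives no control over infinitely many weights at once; we possess no structural mechanism --- no theorem about the size, spread, or arithmetic of the Hecke eigenvalues --- that forces $P_k$ to be irreducible, let alone that forces the $(n-1)$-cycle and the transposition to appear, for all $k$ simultaneously. Supplying such a mechanism is exactly what an actual proof would demand, and none is presently available. The contribution of this paper is complementary to that obstacle: Theorems \ref{CuspidalCase}, \ref{NonCuspidalCase}, and \ref{fCuspidalCase} isolate large families of situations in which a Rankin--Cohen eigenform would force some $T_{\heckeIndex}(x)$ to be reducible, and the observed nonexistence of any such eigenform is evidence that the irreducibility half of the conjecture does persist across weights.
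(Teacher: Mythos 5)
You are right that this statement is Maeda's Conjecture itself, which the paper states as an open hypothesis and does not prove --- it is used only as an assumption and as the object for which the main theorems supply evidence --- so declining to give a proof is exactly the paper's own treatment. Your supplementary sketch of the per-weight verification strategy (irreducibility of the characteristic polynomial of $T_2$, Frobenius factorization patterns to exhibit an $(n-1)$-cycle and a transposition, then Jordan's theorem) is accurate and correctly flagged as failing to be uniform in $k$, so there is nothing to correct.
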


This conjecture implies that all Hecke operators are irreducible. In this paper we related the existence of factorizations of eigenforms through the Rankin-Cohen bracket to the dimension of underlying spaces of modular forms. The primary results are phrased as ``Given such a factorization, all Hecke operators of the proper weight are reducible." Maeda's conjecture leads us to believe that no Hecke operators are reducible. Hence we can view these results in the following light: the lack of factorization of eigenforms gives yet more evidence that Maeda's Conjecture is true. Other papers that address evidence of Maeda's Conjecture include \cite{Ghitza2012} and \cite{Wiese2013}. In particular, as a corollary of the latter paper and this work, for weights up to 12000 the Rankin-Cohen bracket operator never produces an eigenform, except when forced to for dimension consideration.

To summarize the main result, we combine the three main theorems into this:

\begin{theorem}
Assuming Maeda's Conjecture and the conditions listed in Theorems \myref{CuspidalCase}, \myref{NonCuspidalCase}, and \myref{fCuspidalCase}, Eigenforms cannot possibly be factored through the Rankin-Cohen bracket in a way such that one of the factors is another eigenform.''
\end{theorem}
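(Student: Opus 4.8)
The plan is to obtain this summary statement as a direct corollary of the three main theorems, with Maeda's Conjecture entering only through its single consequence that every Hecke polynomial $T_\heckeIndex(x)$ of weight $\wtf+\wtg+2\index$ is irreducible over $\dq$. First I would dispose of the one configuration not covered by Theorems \myref{CuspidalCase}, \myref{NonCuspidalCase}, and \myref{fCuspidalCase}, namely when both $f$ and $g$ are cuspidal: here the $q^0$ and $q^1$ coefficients of $[f,g]_\index$ both vanish, so it cannot be a normalized eigenform, and no appeal to Maeda is required. This leaves exactly the three pairings tabulated in Figure \myref{TableOfCases}, each governed by one of the three theorems, so the four cases together exhaust every pairing of an eigenform $f$ with a modular form $g$.

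The heart of the argument is then the ``dimension-increasing'' branch, which appears as Case 1 in all three theorems and asserts that the existence of a $g$ with $[f,g]_\index$ an eigenform forces all $T_\heckeIndex(x)$ to be reducible. Reading each of these implications through its contrapositive and invoking Maeda's Conjecture, which forbids any such reducibility, yields immediately that $[f,g]_\index$ is not an eigenform for any admissible $g$. Thus in every case where the target space $S_{\wtf+\wtg+2\index}$ is strictly larger than the image of the bracket operator can fill, the Rankin-Cohen bracket provably cannot produce an eigenform.

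The complementary branches are precisely the situations the statement excludes as ``forced by the dimensions.'' When the relevant dimensions coincide, the operator $[f,\bullet]_\index$ carries a diagonal basis to a diagonal basis and is therefore surjective, so an eigenform in the target is necessarily in its image; the bracket is then an eigenform not through any arithmetic coincidence but because the spaces leave no alternative. The delicate intermediate cases are Case 3 of Theorem \myref{NonCuspidalCase} and Case 2 of Theorem \myref{fCuspidalCase}, where the verdict turns on whether $[f,\bullet]_\index$ is injective: when injectivity fails the image dimension drops and the Case 1 argument (hence Maeda) again rules out eigenforms, while when it holds we fall back into a forced situation.

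I expect the principal obstacle to be bookkeeping rather than new mathematics. One must check that the four cases, together with their injectivity-keyed sub-cases, genuinely partition all pairs $(f,g)$, so that the phrase ``forced by the dimensions'' captures exactly the surjective and injective branches and nothing slips through the cracks. Assembling the three contrapositives into a single uniform statement, and confirming that the finitely many forced exceptions recorded in Figure \myref{Cases149} are precisely those that Maeda's Conjecture leaves untouched, is what makes the summary theorem both correct and sharp.
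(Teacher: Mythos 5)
Your proposal is correct and matches the paper's (implicit) argument: the paper offers this theorem purely as a summary corollary of Theorems \ref{CuspidalCase}, \ref{NonCuspidalCase}, and \ref{fCuspidalCase}, obtained exactly as you do by taking the contrapositive of each ``reducibility'' branch, invoking Maeda's Conjecture to rule out reducible Hecke polynomials, and setting aside the both-cuspidal case and the dimension-forced/injectivity-keyed branches as the excluded exceptions. Your additional bookkeeping --- checking that the four cases of Figure \ref{TableOfCases} exhaust all pairs $(f,g)$ and that the forced cases of Figure \ref{Cases149} are precisely those untouched by Maeda --- is exactly the content the paper leaves implicit.
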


\bibliographystyle{plain}
\bibliography{mybib}
\end{document}